\newcommand{\R}{\mathds{R}}
\newcommand{\M}{\mathcal{M}}
\newcommand{\G}{\mathcal{G}}
\newcommand{\at}[2][]{#1|_{#2}}
\theoremstyle{thmstyletwo}%
\newtheorem{theorem}{Theorem}
\newtheorem{definition}{Definition}
\begin{document}


\title{From local isometries to global symmetries: \\ Bridging Killing vectors and Lie algebras through induced vector fields}

\author{Thales B. S. F. Rodrigues}
 \email{thalesfonseca@ice.ufjf.br}
 \affiliation{%
 Departamento de Física, Programa de Pós-Graduação em Física, Universidade Federal de Juiz de Fora
}%
\author{B. F. Rizzuti}%
 \email{brunorizzuti@ice.ufjf.br}
\affiliation{%
 Departamento de Física, Programa de Pós-Graduação em Física, Universidade Federal de Juiz de Fora
}%


\begin{abstract}
The study of symmetries in the realm of manifolds can be approached in two different ways. On one hand, Killing vector fields on a (pseudo-)Riemannian manifold correspond to the directions of local isometries within it. On the other hand, from an algebraic perspective, global symmetries of such manifolds are associated with group elements. Although the connection between these two concepts is well established in the literature, this work aims to build an unexplored bridge between the Killing vector fields of $n$-dimensional maximally symmetric spaces and their corresponding isometry Lie groups, anchoring primarily on the definition of induced vector fields. As an application of our main result, we explore two specific examples: the three-dimensional Euclidean space and Minkowski spacetime.
\end{abstract}

\maketitle


\section{Introduction}\label{Sec1}

On pure geometric grounds, Killing vector fields \cite{nakahara_geometry_1990} play a central role in Riemanian manifolds. They serve as elements of tangent planes, generating local isometries and, in some approaches, could be used to construct global conserved structures on general relativistic spacetimes \cite{Feng_2018}. Its arms extends to different areas, ranging from classical mechanics \cite{rizzuti_square_2019} to the description of constant-curvature spaces of a homogeneous isotropic and static spacetimes \cite{Katanaev, dobarro_characterizing_2012} passing through, recently, to their meaning in comprehending fluid flows on curved surfaces \cite{shimizu_hydrodynamic_2024}. According to \cite{SANCHEZ1997643}, there are several approaches to study these vector fields on Lorentzian manifolds (i.e., pseudo-Riemannian manifolds which admit a pseudo-metric $\eta$). One of them is to employ proper actions of Lie groups. However, it's not explicitly addressed how to recover the Lie group using the expression of Killing vector fields, which is the case in \cite{kobayashi_transformation_1972} as  well.   

On the other side, Lie groups \cite{hall_lie_2015} act on manifolds, inducing a linear representation on its tangent space, representing metric-preserving global transformations. Far from being its only application, Lie groups and algebras reach distinct branches of both geometric and algebraic settings, see, for example, \cite{benayadi_pseudo-euclidean_2023} and references therein.

There are interesting results relating these two approaches ---by  one side a local geometrical isometry and, on the other, the global metric preservation. For example, it is known that the set of Killing vector fields on an manifold $\M$ is indeed a Lie algebra \cite{alexandrino_lie_2015}. Even more, for the case of curved spaces, if the Riemann curvature tensor vanishes at some point $p\in \M$, then the Lie algebra of Killing vector fields is related to a subalgebra of $SO(r,q)$, where the pair $(r,q)$ denotes the signature of the local  pseudo-metric of the tangent space at $p$ \cite{atkins_lie_2008}. Hence, leveraging on the previous cited results, our main point of investigation lies on the study of, up to our knowledge, a not well explored connection between these two topics. 

More specifically, our investigation takes a significant turn as we identify a connection between Killing vector fields of $(\R^n,\delta)$ and $(\R^{1 + n},\eta)$ (for $n\geq 2$)\footnote{In our construction, we arrive at certain specific symmetry groups, which requires imposing restrictions on the dimension $n$. Without these restrictions, it would be impossible to reconstruct the rotation generators in a general manner across both manifolds of interest. For example, discussing spatial rotations in $(\R, \delta)$ or $(\R^2, \eta)$ would be senseless.}  and Lie algebra elements by interpreting them as induced vector fields, with the definition of the latter being presented in \cite{nakahara_geometry_1990}. Furthermore, the connection we demonstrate here between this definition and Killing vector fields as generators of Lie algebras, to the best of our knowledge, has not been thoroughly investigated in the literature. We delve into specific examples of our construction (section \ref{Sec3}), namely, $(\R^3, \delta)$ and $(\R^{4}, \eta)$. This, in turn, when identifying the Killing vector fields of these manifolds as induced vector fields, allows us to reconstruct the Lie groups that preserve the respective metrics, which are, in the first instance, the special Euclidean group $SE(3)$, whereas the Poincaré group emerges in the second case. 

Our findings contribute to a deeper understanding of the interplay between geometry and group theory, reinforcing the bridge between the intrinsic geometry of manifolds and the algebraic structures governing their symmetries.  Thus, we summarize our main result in the following theorem, that will be proved in the next section. 

\begin{theorem}
\label{theo:1}
   Let $(\R^n, \delta)$ and $(\R^{1+n}, \eta)$ represent the $n-$dimensional Euclidean space and the $1+n-$dimensional Minkowski spacetime ($n\geq 2$), respectively. Suppose we define their Killing vector fields as induced vector fields by Lie algebra elements. Then, they generate the corresponding isometry groups: the special Euclidean group $SE(n)$ in the former and the Poincaré group in the latter.
\end{theorem}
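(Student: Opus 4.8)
The plan is to prove the theorem constructively by working through each manifold separately, exploiting the fact (assumed from the cited literature) that the Killing vector fields of these maximally symmetric flat spaces already form a Lie algebra of the expected dimension. The central idea is the notion of an \emph{induced vector field}: given a Lie group $G$ acting on a manifold $\M$ via $\sigma:G\times\M\to\M$, each element $X$ of the Lie algebra $\mathfrak{g}$ generates a one-parameter subgroup $\exp(tX)$, whose action on $\M$ produces a flow whose velocity field at each point is the induced vector field $X^\sharp$. My strategy is to reverse this construction: I start from the explicit Killing vector fields, reinterpret each one as the induced field of a putative Lie algebra element, read off the corresponding one-parameter subgroup by exponentiation, and then show that the collection of these subgroups closes into precisely the isometry group.

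First I would treat the Euclidean case $(\R^n,\delta)$. I would write down the general solution of the Killing equation $\nabla_\mu \xi_\nu + \nabla_\nu \xi_\mu = 0$, which in flat Cartesian coordinates splits into translational generators $P_a = \partial_a$ and rotational generators $L_{ab} = x_a\partial_b - x_b\partial_a$, giving the expected dimension $n + \binom{n}{2}$. The key step is to identify each of these vector fields as an induced vector field: I would exhibit a group $G$, an action $\sigma$ on $\R^n$, and Lie algebra elements whose induced fields reproduce exactly $P_a$ and $L_{ab}$. Concretely, I would take the candidate group to be $SE(n)=\R^n\rtimes SO(n)$ acting affinely, compute the induced field of a generic algebra element through $X^\sharp_p = \frac{d}{dt}\big|_{t=0}\,\sigma(\exp(tX),p)$, and verify termwise that this matches the Killing field expansion. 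Then I would confirm closure by checking that the Lie brackets of the induced fields reproduce the $\mathfrak{se}(n)$ commutation relations, so that exponentiating recovers $SE(n)$ itself rather than a proper subset.

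Second, I would repeat the argument for Minkowski spacetime $(\R^{1+n},\eta)$, now using the indefinite metric. The Killing equation yields translations $P_\mu=\partial_\mu$ together with the generators $M_{\mu\nu}=x_\mu\partial_\nu - x_\nu\partial_\mu$ (indices lowered with $\eta$), which include both spatial rotations and Lorentz boosts. Exactly as before, I would cast these as induced vector fields of the candidate group, here the Poincaré group $\R^{1+n}\rtimes SO(1,n)$ acting by $x\mapsto \Lambda x + a$, and confirm that the matching and the bracket closure identify the algebra with $\mathfrak{iso}(1,n)$. The constraint $n\geq 2$ enters here (and in the Euclidean case) because at least two spatial dimensions are needed for the rotation generators $L_{ab}$ to exist at all, which is the reason for the footnote in the statement.

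The main obstacle I anticipate is conceptual rather than computational: making the identification of a Killing field with an induced vector field \emph{precise and bidirectional}. The induced-vector-field construction naturally produces a field from a group action, so the delicate point is establishing that \emph{every} Killing field arises this way from a well-defined group action, and conversely that no spurious generators appear, so that the span of the induced fields is in exact bijective correspondence with $\mathfrak{g}$. This requires care in specifying the action $\sigma$ so that the differential $\sigma_{*}$ at the identity sends the abstract algebra isomorphically onto the Killing algebra, and in verifying that the exponential map assembles the one-parameter subgroups into the full connected isometry group (up to the discrete and orientation components, which I would comment on but not dwell upon). Once this correspondence is pinned down, the closure of the bracket and the exponentiation to the global group follow from standard Lie theory.
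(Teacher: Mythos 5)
Your proposal is correct, and its core identity --- differentiate the exponentiated action at $t=0$ and match against the Killing field --- is the same one the paper uses; but you run the logic in the opposite direction, and this changes the shape of the argument. The paper never posits the answer: it assumes each Killing field is induced by an \emph{unknown} matrix Lie algebra element of an \emph{unknown} group $\G$, exploits the fact that $\frac{d}{dt}e^{tA}\big|_{t=0}\cdot\vec{x} = A\vec{x}$ determines the matrix $A$ uniquely, reads off the matrix elements (e.g.\ $(A_{ij})_{kl}=\delta_{il}\delta_{jk}-\delta_{ik}\delta_{jl}$), and only then \emph{recognizes} these as the standard generators of $\mathfrak{so}(n)$, $T(n)$, $\mathfrak{so}(1,n)$ and the spacetime translations, concluding via the semidirect-product decompositions $SE(n)\cong SO(n)\rtimes T(n)$ and $\mathcal{R}^{1,n}\rtimes SO(1,n)$. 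You instead posit the candidate group ($SE(n)$, resp.\ Poincar\'e) with its affine action $x\mapsto \Lambda x + a$, compute the induced fields, and verify the match plus bracket closure. Two consequences of this difference are worth noting. First, translations: because a constant field $\partial_i$ cannot be induced by a purely \emph{linear} action (a linear field vanishes at the origin), the paper is forced to embed $\R^n\hookrightarrow\R^{n+1}$ via homogeneous coordinates $(x^1,\dots,x^n,1)$ so that translations become matrix multiplication, consistent with its stated restriction to matrix groups; your affine action sidesteps this device entirely, which is cleaner but departs from the paper's strictly matrix-multiplicative setup. Second, your verification direction genuinely needs the bijectivity/no-spurious-generators argument you flag in your last paragraph (the paper's reverse-engineering gets uniqueness for free from the linear comparison), and your explicit check that the brackets of the induced fields close on $\mathfrak{se}(n)$ and $\mathfrak{iso}(1,n)$ is a step the paper omits altogether, relying instead on recognition of the generator matrices. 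So your route is somewhat more self-contained as Lie theory, while the paper's route better matches the theorem's phrasing, in which the group is meant to be \emph{recovered} from the Killing fields rather than assumed and confirmed.
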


Our notation will be the following. When referring to arbitrary indices of objects on \(n\)-dimensional manifolds, Latin letters $i,j,k,\dots$ run from $1$ to $n$. Greek letters $\alpha, \beta, \mu, \dots$ run from $0$ to $n$. In some contexts, which will be specified throughout the text, the letters $i,j,k, \dots$ will run from $1$ to $3$, whilst $\alpha, \beta, \mu, \dots$ will run from $0$ to $3$. Other exceptions will be pinpointed whenever necessary. Einstein notation of summation is employed throughout the text, and we restrict our discussion to matrix groups only. The $\eta$ signature adopted was $(-1,1,1,\dots,1)$. 

\section{Proof of Theorem \ref{theo:1}}\label{Sec2}

In this section, we will demonstrate the main result of this work, which is based on the connection between Killing vectors and Lie algebras through the definition of induced vector fields. The concept of induced vector fields and their relation to Lie algebra elements, which is central to your construction, is a standard topic in the literature that focuses on the intersection of differential geometry and Lie group theory \cite{LeeSmooth, nakahara_geometry_1990, differential_geometry_Lie_groups}. Here, for the proof of our Theorem \ref{theo:1}, we present the following definition of induced vector fields, which is based on an extended discussion found in \cite{nakahara_geometry_1990}.

\begin{definition}\label{def.induced.vector}
Let $\Sigma: \G \times \M \rightarrow \M$ be an action of the Lie group  $\G$ on a manifold $\M$. Given $V \in T_e\G$ and a point $p \in \M$, define a flow in $\M$ by  $$ \sigma(t,p):= \Sigma \left( e^{tV},p\right ).$$
The left-invariant vector $X_V$ generated by $V$ induces the following vector field, which is, basically, a tangent vector to the curve $\sigma(t,p)$
$$ V^I\vert_p := \frac{d}{dt} \sigma(t,p)\Big \vert_{t=0} = \frac{d}{dt}\Sigma \left( e^{tV},p\right )\Big \vert_{t=0}. $$
The map $I: T_e \G \rightarrow \mathfrak{X}(\M)$ defines what we call the induced vector field $V^I$. 
\end{definition}
In what follows, we proceed to the proof. 


\begin{proof}[Proof of Theorem~\ref{theo:1}]
Suppose, firstly, that the Killing vector fields $\mathcal{J}_{ij} = x^i \partial_j - x^j \partial_i$ ($i$ and $j$ cyclic) of $(\R^n, \delta)$ are induced vector fields by some arbitrary Lie algebra elements $A_{ij}\in T_e \G$ of some Lie group $\G$. Let $\vec{x} = (x^1,x^2, \dots, x^n)$ be a coordinate point of $\R^n$. Then, by the Definition \ref{def.induced.vector}, we have 

\begin{equation}
\label{eq:RotationRn}
    \mathcal{J}_{ij} = x^i \partial_j - x^j \partial_i \equiv \mathcal{J}^I_{ij} = \frac{d}{dt} e^{tA_{ij}}\at[\bigg]{t=0} \cdot \vec{x}, 
\end{equation}
where, in matrix notation,\footnote{The dot ``$\cdot$'' used throughout all the proof (and in the subsequent section) is intent to symbolizes matrix multiplication.} by a direct comparison of both sides in equation \eqref{eq:RotationRn}, we can conclude that, in terms of the matrix elements of the matrices $A_{ij}$, 
\begin{equation}
\label{eq:SO(N)gen}
     (A_{ij})_{kl} = \delta_{il} \delta_{jk} - \delta_{ik} \delta_{jl}.
\end{equation}

Next, suppose that the remaining Killing vector fields $\mathcal{P}_i = \partial_i$ of $(\R^n, \delta)$ are induced vector fields by other Lie algebra elements $T_i \in T_e \mathcal{G}$. Consider the vector space defined by 
\begin{equation}
\label{eq:spaceRn+1}
    \vec{x} \in \R^{n+1}; \qquad \vec{x} = (x^1, \dots, x^n, 1)
\end{equation}
and operations 
\begin{equation}
    \begin{split}
     \vec{x} + \vec{y} &:= (x^1 + y^1, \dots , x^n + y^n, 1),    \\
        \lambda \vec{x} &:= (\lambda x^1, \dots ,\lambda x^n,1).  
    \end{split}
\end{equation}
Clearly there is an isomorphism between $\R^n$ and $\R^{n+1}$ defined in this way. Using the same prescription adopted in equation \eqref{eq:RotationRn}, but now considering a coordinate point $\vec{x} \in \R^{n+1}$ as given in \eqref{eq:spaceRn+1}, we arrive at the following expression for the matrix elements of the $T_i$:
\begin{equation}
\label{eq:Transgen}
     (T_i)_{jk} = \delta_{ij} \delta_{n+1 k}, 
\end{equation}
with $j$ and $k$ in this special case running values from $1$ to $n+1$.

Here, one can recognize that the right-hand side of expression \eqref{eq:SO(N)gen} corresponds to the matrix elements of the generators of the Lie algebra $\mathfrak{so}(n)$ and, consequently, of the Lie group $SO(n)$. The subscript indices $i$ and $j$ on the left-hand side of \eqref{eq:SO(N)gen} represent an antisymmetric enumeration of the generators, while $k$ and $l$ label matrix rows and columns, respectively. On the other hand, the right-hand side of expression \eqref{eq:Transgen} corresponds to the matrix elements of the $i$-th Lie algebra generator of the translation group $T(n)$. Thus, by combining both the results obtained from the Killing vector fields $\mathcal{J}_{ij}$ and $\mathcal{P}_i$ of $(\R^n, \delta)$, and using the fact that the special Euclidean group $SE(n)$ can be represented as the semi-direct product $SE(n) \cong SO(n) \rtimes T(n)$, we can conclude that, when viewed as induced vector fields, they generate the entire group $SE(n)$.


We turn our attention now to the second part of the theorem. Let us suppose that the Killing vectors $\mathfrak{K}_i = x^i \partial_0 + x^0 \partial_i$ and $\mathfrak{J}_{ij} = x^i\partial_j - x^j \partial_i$ ($i$ and $j$ cyclic) of $(\R^{1+n}, \eta)$ are induced vector fields by some arbitrary Lie algebra elements $C_i$ and $B_{ij} \in T_e\mathcal{G}$, respectively. Then, following the same procedure applied in equation \eqref{eq:RotationRn}, but now considering a coordinate point $\vec{x} = (x^0, x^i) \in \R^{1+n}$, and using the Definition \ref{def.induced.vector}, we can find 
\begin{equation}
\label{eq:SO(1,3)gen}
  \begin{array}{cc}
     {(C_i)^\alpha}_\beta  = {\delta^\alpha}_0 \eta_{i\beta} - {\delta^\alpha}_ i \eta_{0\beta};     \\
  {(B_{ij})^\alpha}_\beta = {\delta^\alpha}_j \eta_{i \beta} - {\delta^\alpha}_i \eta_{j \beta}.     
  \end{array}
\end{equation}
Once again, from the right-hand side of the expressions in \eqref{eq:SO(1,3)gen}, we directly recognize that $C_i$ and $B_{ij}$ are the generators of the Lie algebra $\mathfrak{so}(1,n)$. The former is responsible for generating the boost sector of the group $SO(1,n)$, while the latter generates its rotation sector.

Finally, let us suppose that the Killing vectors $\mathfrak{P}_\mu = \partial_\mu$ of $(\R^{1+n}, \eta)$ are induced vector fields by some arbitrary Lie algebra elements $W_\mu \in T_e \mathcal{G}$ of an arbitrary group $\mathcal{G}$. Then, considering a vector space defined analogously to the expression \eqref{eq:spaceRn+1}, but now with $\vec{x} \in \R^{(1 + n) + 1 }$ and operations
\begin{equation}
\begin{split}    
     \vec{x} + \vec{y} &:= (x^0 + y^0, \dots, x^{n} + y^{n},1), \\
      \lambda \vec{x} &:= ( \lambda x^0, \dots, \lambda x^{n}, 1), 
\end{split}
\end{equation}
we can find that, following the same approach taken in equation \eqref{eq:RotationRn} and in terms of the matrix elements, 
\begin{equation}
    \label{eq:Transgenminko}
    (W_\mu)_{\alpha \beta} = \delta_{\mu \alpha} \delta_{n+1 \beta},
\end{equation}
where the indices $\alpha$ and $\beta$ in this case are taking values between $0$ and $n+1$. The right-hand side of equation \eqref{eq:Transgenminko} coincides with the matrix elements of the $\mu-$th generator of the spacetime translation group $ \mathcal{R}^{1,n}$. Therefore, using the fact the Poincaré group, which acts on $(\R^{1+n}, \eta)$, can be described as $ \mathcal{R}^{1,n} \rtimes SO(1, n)$, we can conclude that the Killing vector fields $\mathfrak{K}_i, \mathfrak{J}_{ij},$ and $\mathfrak{P}_\mu$ of $(\R^{1+n}, \eta)$, when viewed as induced vector fields according to Definition \ref{def.induced.vector}, generates the aforementioned Poincaré group.
\end{proof}


\section{Applications in the 3D Euclidean space and 4D Minkowski spacetime}\label{Sec3}

In physical grounds, the $3-$dimensional Euclidean space $(\R^3,\delta)$ and the Minkowski spacetime $(\R^{4}, \eta)$ have several applications across virtually all theoretical physics. The former servers as the foundation for physics ranging from Newtonian mechanics to non-relativistic electromagnetism. The latter is the pillar of special relativity and is also crucial for models of quantum field theory in flat spacetime. Thus, despite the construction presented in this work has a generalized character, this brief digression is more than necessary to explain our motivation for using these manifolds to exemplify our main result.

We start with $(\R^3, \delta)$. Let us suppose that $A_i \in T_e \G$ are elements of the Lie algebra corresponding to the Lie group $\G$ we intend to find. Supposing that the Killing vector fields $\mathcal{J}_i = \epsilon_{ijk}x^j \partial_k$ are induced vector fields. Then, by the Definition \ref{def.induced.vector}, are expressed as
\begin{equation}\label{estrela}
    \mathcal{J}_i \equiv \mathcal{J}^I_i = \frac{d}{dt} e^{tA_i}\at[\bigg]{t=0} \cdot \Vec{r},
\end{equation}
where $\Vec{r} = (x^1, x^2, x^3)$ are coordinates to an arbitrary element of the tangent plane $T_p \R^3 \cong \R^3$. For $i=1$ in \eqref{estrela}, we have
\begin{equation}
    \mathcal{J}_1= x^2 \partial_3 -x^3\partial_2 = 
    \begin{pmatrix}
        0 \\ -x^3 \\ x^2 
    \end{pmatrix} = 
    \frac{d}{dt} e^{tA_1}\at[\bigg]{t=0}
    \cdot
    \begin{pmatrix}
        x^1 \\ x^2 \\ x^3 
    \end{pmatrix}.
\end{equation}
Thus, we obtain
\begin{equation}
    A_1 = 
    \begin{pmatrix}
        0 & 0 & 0 \\ 
        0 & 0 & -1 \\
        0 & 1 & 0 
    \end{pmatrix}. 
\end{equation}
By repeating the same calculations for $i=2,\, 3$, we are led to the conclusion that $A_i = \tau_i$, where $\tau_i$ are the well known generators of the Lie algebra $\mathfrak{so}(3)$. In other words, the Killing vector fields can be regarded as the generators of the algebra $\mathfrak{so}(3)$, which, in turn, generates the group of rotations $SO(3)$. While the Killing vector fields act locally on tangent spaces on manifolds, they generate global rotations $R \in SO(3)$ that leave $\delta$ unchanged, that is, $\delta(R \Vec{r}_1, R \Vec{r}_2) = \delta(\Vec{r}_1, \Vec{r}_2)$.

The second instance follows a similar procedure to what we have employed thus far. Once again, we posit that the Killing vector fields $\mathfrak{K}_i = x^i \partial_0 + x^0\partial_i$ and $\mathfrak{J}_i = \epsilon_{ijk}x^j \partial_k$ of $(\R^{4}, \eta)$ are induced vector fields generated by Lie algebra elements of a Lie group $\G$ yet to be determined. In this case,
\begin{equation}
    \mathfrak{K}_i \equiv \mathfrak{K}^I_i = \frac{d}{dt}e^{tB_i}\bigg \vert_{t=0} \cdot \Vec{x}, \,\,\,\,
    \mathfrak{J}_i \equiv \mathfrak{J}^I_i = \frac{d}{dt}e^{tD_i}\bigg \vert_{t=0} \cdot \Vec{x}. 
\end{equation}
$B_i$ and $D_i$ are elements of the $T_e\G$ and $\Vec{x} = (x^0, x^1, x^2, x^3)$ represents an element in $T_{p} \R^{4} \cong \R^{4}$. For the particular case when $i=1$, we obtain $B_1$ and $D_1$. They are given by
\begin{equation}
    \mathfrak{K}_1 = x^0 \partial_1+ x^1 \partial_0 = \begin{pmatrix}
        x^1 \\ x^0 \\ 0 \\ 0
    \end{pmatrix} = 
    \frac{d}{dt}e^{tB_1}\bigg \vert_{t=0} \cdot 
    \begin{pmatrix}
        x^0 \\ x^1 \\ x^2 \\ x^3
    \end{pmatrix}
\end{equation}
which implies,
\begin{equation}
    B_1 =
    \begin{pmatrix}
        0 & 1 & 0 & 0 \\
        1 & 0 & 0 & 0 \\
        0 & 0 & 0 & 0 \\
        0 & 0 & 0 & 0 
    \end{pmatrix}. 
\end{equation}
Similarly, we have 
\begin{equation}
    \mathfrak{J}_1 = x^2 \partial_3 - x^3 \partial_2 = \begin{pmatrix}
        0 \\ 0 \\ -x^3 \\ x^2
    \end{pmatrix} = 
    \frac{d}{dt}e^{tD_1}\bigg \vert_{t=0} \cdot
    \begin{pmatrix}
        x^0 \\ x^1 \\ x^2 \\ x^3
    \end{pmatrix},
\end{equation}
from which, analogously, we find
\begin{equation} 
    D_1 = 
    \begin{pmatrix}
        0 & 0 & 0 & 0 \\
        0 & 0 & 0 & 0 \\
        0 & 0 & 0 & -1 \\
        0 & 0 & 1 & 0 
    \end{pmatrix}. 
\end{equation}

With the same argument and evaluation, running the values $i=2$ and $3$ we get
\begin{equation}
    B_i = K_i, \,\,\, D_i = J_i, 
\end{equation}
where $K_i$ are the boosts generators and $J_i$ are the rotation generators of the Lie algebra $\mathfrak{so}(1,3).$
Thus, we conclude that the Killing vector fields $\mathfrak{K}_i$ and $\mathfrak{J}_i$, from the perspective of induced vector fields, generates the group $SO(1,3)$. The latter, in its turn, is a group of global isometries (without space and time reversions) of $(\R^{4}, \eta)$ in the sense of preserving $\eta$: $\eta(\Lambda \Vec{x}, \Lambda \Vec{y}) = \eta( \Vec{x}, \Vec{y})$, for some $\Lambda \in SO(1,3)$.

\subsection{The translation sector}

Due to its linear structure, the Killing equations of $(\R^3,\delta)$ and $(\R^{4}, \eta)$ also admit linear combinations of $\partial_i$ and $\partial_\mu$ as solutions 
\begin{equation}
    T_{\R^3} = \varepsilon^i \partial_i, \qquad T_{\R^{4}} = a^\mu \partial_\mu,
\end{equation}
respectively. In terms of coordinates, the translations in both manifolds read,
\begin{equation}
    x^i \stackrel{T_{\R^3}}{\longmapsto} x'^i = x^i  + \varepsilon^i, \qquad x^\mu \stackrel{T_{\R^{4}}}{\longmapsto} x'^\mu  = x^\mu + a^\mu,
\end{equation}
which are not linearly realized. To address this issue, we add an extra dimension with no physical interpretation. To do so, consider the vector spaces defined by
\begin{equation}
\vec{x}\in \R^{4}; \quad \vec{x} = (x^1, x^2,x^3,1)    
\end{equation}
and operations, 
\begin{equation}
    \begin{split}
        \vec{x} + \vec{y} &:= (x^1 + y^1 , x^2  + y^2 , x^3 + y^3,1),  \\
        \lambda \vec{x}&  :=(\lambda x^1, \lambda x^2, \lambda x^3,1) 
    \end{split}
\end{equation}

Now, consider the linear operator represented by  
\begin{equation}
    \mathfrak{T}_{\R^3} = 
    \begin{pmatrix}
            1 & 0 & 0 & \varepsilon^1 \\
            0 & 1 & 0 & \varepsilon^2 \\
            0 & 0 & 1 & \varepsilon^3 \\
            0 & 0 & 0 & 1
    \end{pmatrix}.
\end{equation}
We have,
\begin{equation}
    \vec{x} \mapsto \vec{x}' = \mathfrak{T}_{\R^3}\vec{x} \iff x'^{i} = x^i  + \varepsilon^i.
\end{equation}
Analogously, for $(\R^{4},\eta)$ case, we introduce the five-dimensional space 
\begin{equation}
    \vec{x} \in  \R^5;\,\,\, \vec{x} = (x^0,x^1, x^2,x^3,1)
\end{equation}
with the same operations as before. Here, the linear operator that is intended to represent translations reads 
\begin{equation}
        \mathfrak{T}_{\R^{4}} =
    \begin{pmatrix}
    1 & 0 & 0 & 0& a^0 \\
    0 & 1& 0& 0& a^1 \\
    0& 0& 1 & 0& a^2 \\
    0 & 0& 0 & 1 & a^3\\
    0 & 0& 0&0  & 1  
    \end{pmatrix}.
\end{equation}
A direct calculation also shows that  
\begin{equation}
    \vec{x} \mapsto  \vec{x}' = \mathfrak{T}_{\R^{4}}\vec{x} \iff x'^{\mu} = x^\mu  +a^\mu.     
\end{equation}

Due to the nilpotent nature of the translation generators, we point out that $\mathfrak{T}_{\R^3} = e^{\varepsilon^i P_i}$, where $P_i$ are the matrices whose the only non-vanishing element is $1$, placed in the fourth column and $i$-th row (clearly a special case of the expression \eqref{eq:Transgen} for $n=3$). We also have $\mathfrak{T}_{\R^{4}} = e ^{a^\mu \widetilde{P}_\mu}$, with $\widetilde{P}_\mu$ are the matrices whose the only non-vanishing element is $1$, placed in the fifth column and $\mu$-th row (analogously of what we find in expression \eqref{eq:Transgenminko}, but for $n=3$). With the same prescription as before, identifying the parameterized Killing vector fields $T_{\R^3}$ and $T_{\R^{4}}$ as induced vector fields generated by a Lie algebra elements of a group to be obtained, we note that the differentials of $\mathfrak{T}_{\mathds{R}^3}$ and $\mathfrak{T}_{\mathds{R}^4}$, analogously to the Definition \ref{def.induced.vector}, do the job, respectively. In other words, by viewing the Killing vectors $T_{\R^3}$ and $T_{\R^{4}}$ as induced vector fields, we recover the generators $P_i$ of the translation group $T(3)$ and the generators $\widetilde{P_\mu}$ of the spacetime translation group $ \mathcal{R}^{1,3}$, respectively.  

In both cases presented in this example, by adding an extra (nonphysical) dimension to address the non-linearity of translations (as done more generally in the proof of the Theorem \ref{theo:1}), we can conclude the following: The Killing vector fields of $(\R^3, \delta)$, when defined as induced vector fields, generate the entire group of rotations and translations in the $3$-dimensional Euclidean space, i.e., the special Euclidean group $SE(3) \cong SO(3) \rtimes T(3)$. Similarly, the Killing vector fields of $(\R^{4}, \eta)$ generate the Poincaré group $\mathcal{R}^{1,3} \rtimes SO(1,3)$ which includes boosts, rotations, and translations in the Minkowski spacetime $(\R^{4}, \eta)$.

\section{Conclusion}\label{Sec6}

In this work, we have presented a previously unexplored method to recover Lie algebras, and consequently the corresponding Lie groups, from the Killing vector fields of the \(n\)-dimensional maximally symmetric spaces \((\R^n, \delta)\) and \((\R^{1+n}, \eta)\), relying solely on the very concept of induced vector fields. As stated in Theorem \ref{theo:1} and demonstrated throughout our construction, the definition of induced vector fields serves as a bridge, linking local isometries---derived from the Killing vector fields---to global symmetries, represented by the previously mentioned Lie groups $SE(n)$ and the Poincaré group, that act on these manifolds.

Finally, although the connection between Killing vectors and Lie groups is well established in the literature, as mentioned earlier, we hope that our work offers a new alternative approach to deriving this connection. Reinforcing once again, the bridge between local and global symmetries inherent in maximally symmetric spaces.

\begin{acknowledgments}
The work was partially supported by Coordenação de Aperfeiçoamento de Pessoal de Nível Superior - Brasil (CAPES) - Código de Financiamento 001.

\end{acknowledgments}

\end{document}